\DeclareMathSymbol{\lcurs}{\mathord}{letters}{"0B}
\newtheorem{theorem}{\bf Theorem}[section]
\newtheorem{corollary}[theorem]{\bf Corollary}
\newtheorem{lemma}[theorem]{\bf Lemma}
\newtheorem{proposition}[theorem]{\bf Proposition}
\newcommand{\qed}{\hfill $\square$ \bigskip}
\newenvironment{wlist}
{\vspace{-10pt}
\begin{list}{}
{\setlength{\labelwidth}{15mm} \setlength{\partopsep}{0pt}
\setlength{\parskip}{0pt} \setlength{\topsep}{0pt}
\setlength{\itemsep}{0pt} \setlength{\parsep}{0pt}
\setlength{\labelsep}{10pt} \setlength{\leftmargin}{15mm}}
\item[]} {\end{list} \vspace{2pt}
\smallskip}
\begin{document}

\title{Maximal hypercubes in Fibonacci and Lucas cubes}

\author{
Michel Mollard \thanks{Partially supported by the ANR Project GraTel (Graphs for Telecommunications), ANR-blan-09-blan-0373-01}\\
CNRS Universit\'e Joseph Fourier\\
Institut Fourier, BP 74 \\
100 rue des Maths, 38402 St Martin d'H\`eres Cedex, France \\
e-mail:  \tt{michel.mollard@ujf-grenoble.fr}
}

\date{}

\maketitle

\begin{abstract}
The Fibonacci cube $\Gamma_n$ is the subgraph of the 
hypercube induced by the binary strings  that contain no
two consecutive 1's. The Lucas cube $\Lambda_n$ is obtained
from $\Gamma_n$ by removing vertices that start 
and end with 1. We characterize maximal induced hypercubes in  $\Gamma_n$ and $\Lambda_n$ and deduce for any $p\leq n$ the number of maximal $p$-dimensional hypercubes in these graphs.
\end{abstract}

\noindent {\bf Key words:} hypercubes; cube polynomials; Fibonacci cubes; 
Lucas cubes;

\medskip\noindent
{\bf AMS subject classifications:} 05C31, 05A15, 26C10
\section{Introduction}
An interconnection topology can be represented by a graph $G=(V,E)$, where $V$ denotes the processors and $E$ the communication links.
The \emph{distance} $d_G(u,v)$ between two vertices $u,v$ of a graph $G$ is the length of a shortest path connecting $u$ and $v$.  
An \emph{isometric} subgraph $H$ of a graph $G$ is an induced subgraph such that for any vertices $u,v$ of $H$ we have $d_H(u,v)=d_G(u,v)$.

The \emph{hypercube} of dimension $n$ is the graph $Q_n$ whose vertices are the binary strings of length $n$ where two vertices are adjacent if they differ in exactly one coordinate.
The \emph{weight} of a vertex, $w(u)$, is the number of $1$ in the string $u$. Notice that the graph distance between two vertices of $Q_n$ is equal to the \emph{Hamming distance} of the strings, the number of coordinates they differ.
The hypercube is a popular interconnection network because of its structural properties.\\
\indent Fibonacci cubes and Lucas cubes were introduced in \cite{Hsu1993Fibonacci} and \cite{munarini2001lucas} as new interconnection networks. They are isometric subgraphs of $Q_n$ and have also recurrent structure.
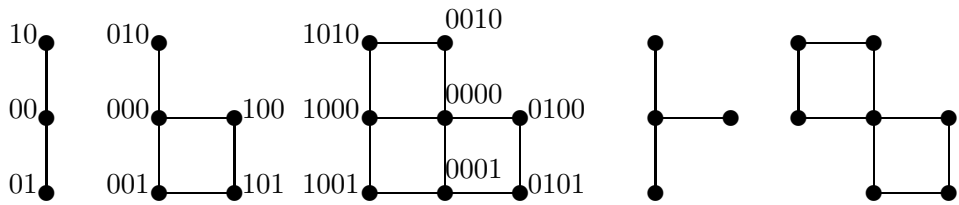
\begin{figure}[!p]
  \centering
\setlength{\unitlength}{1 mm}
\begin{picture}(160, 30)

\newsavebox{\gtwo}
\savebox{\gtwo}
  (30,30)[bl]
  {
  
\put(10,10){\circle*{2}}
\put(10,20){\circle*{2}}
\put(10,30){\circle*{2}}

\put(10,10){\line(0,1){10}}
\put(10,20){\line(0,1){10}}

\put(5,10){$01$}
\put(5,20){$00$}
\put(5,30){$10$}

  }
\newsavebox{\gthree}
\savebox{\gthree}
  (40,30)[bl]
  {
  
\put(10,10){\circle*{2}}
\put(10,20){\circle*{2}}
\put(10,30){\circle*{2}}
\put(20,10){\circle*{2}}
\put(20,20){\circle*{2}}

\put(10,10){\line(1,0){10}}
\put(10,20){\line(1,0){10}}
\put(10,10){\line(0,1){10}}
\put(10,20){\line(0,1){10}}
\put(20,10){\line(0,1){10}}

\put(3,10){$001$}
\put(3,20){$000$}
\put(3,30){$010$}
\put(21,20){$100$}
\put(21,10){$101$}
  }
  
 \newsavebox{\gfour}
\savebox{\gfour}
  (40,30)[bl]
  {
 
\put(10,10){\circle*{2}}
\put(10,20){\circle*{2}}
\put(10,30){\circle*{2}}
\put(20,10){\circle*{2}}
\put(20,20){\circle*{2}}
\put(20,30){\circle*{2}}
\put(30,10){\circle*{2}}
\put(30,20){\circle*{2}}

\put(10,10){\line(1,0){10}}
\put(10,20){\line(1,0){10}}
\put(10,30){\line(1,0){10}}
\put(10,10){\line(0,1){10}}
\put(10,20){\line(0,1){10}}

\put(20,10){\line(1,0){10}}
\put(20,20){\line(1,0){10}}

\put(20,10){\line(0,1){10}}
\put(20,20){\line(0,1){10}}
\put(30,10){\line(0,1){10}}

\put(20,12){$0001$}
\put(20,22){$0000$}
\put(20,32){$0010$}
\put(31,20){$0100$}
\put(1,20){$1000$}
\put(1,10){$1001$}
\put(1,30){$1010$}
\put(31,10){$0101$}
  }
 \newsavebox{\lthree}
\savebox{\lthree}
  (40,30)[bl]
  {
  
\put(10,10){\circle*{2}}
\put(10,20){\circle*{2}}
\put(10,30){\circle*{2}}

\put(20,20){\circle*{2}}

\put(10,20){\line(1,0){10}}
\put(10,10){\line(0,1){10}}
\put(10,20){\line(0,1){10}}

 }
 
\newsavebox{\lfour}
\savebox{\lfour}
  (40,30)[bl]
  {

\put(10,20){\circle*{2}}
\put(10,30){\circle*{2}}
\put(20,10){\circle*{2}}
\put(20,20){\circle*{2}}
\put(20,30){\circle*{2}}
\put(30,10){\circle*{2}}
\put(30,20){\circle*{2}}

\put(10,20){\line(1,0){10}}
\put(10,30){\line(1,0){10}}

\put(10,20){\line(0,1){10}}

\put(20,10){\line(1,0){10}}
\put(20,20){\line(1,0){10}}

\put(20,10){\line(0,1){10}}
\put(20,20){\line(0,1){10}}
\put(30,10){\line(0,1){10}}  }
  
\put(0,-5){\usebox{\gtwo}}  
\put(15,-5){\usebox{\gthree}}
\put(43,-5){\usebox{\gfour}}
\put(81,-5){\usebox{\lthree}}  
\put(100,-5){\usebox{\lfour}}
\end{picture}
\caption{$\Gamma_2=\Lambda_2$, $\Gamma_3$, $\Gamma_4$ and  $\Lambda_3$, $\Lambda_4$}
\end{figure}

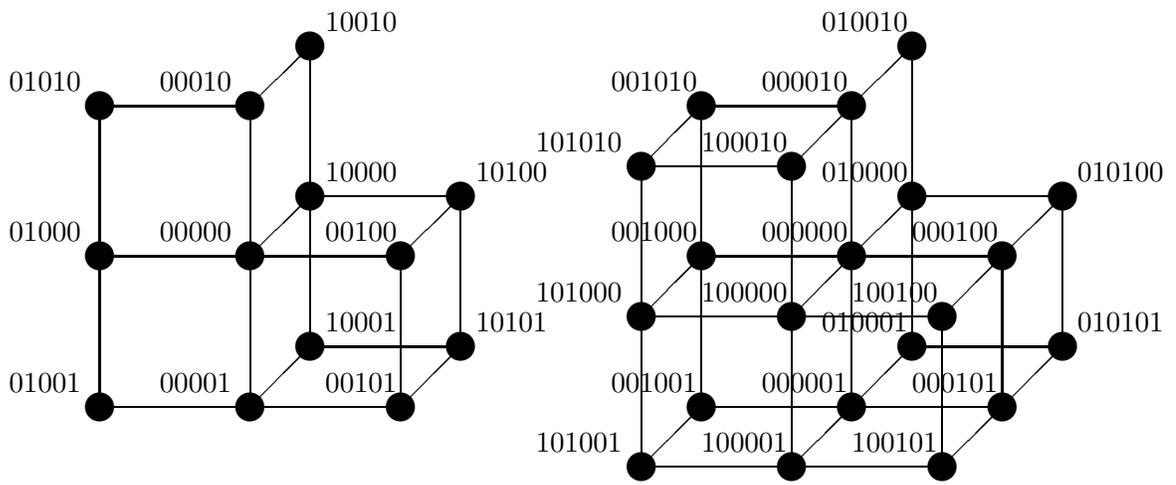
\begin{figure}[!p]
\centering
\setlength{\unitlength}{2mm}
\begin{picture}(80, 35)
 \newsavebox{\ggfour}
\savebox{\ggfour}
  (40,40)[bl]
  {
 
\put(10,10){\circle*{2}}
\put(10,20){\circle*{2}}
\put(10,30){\circle*{2}}
\put(20,10){\circle*{2}}
\put(20,20){\circle*{2}}
\put(20,30){\circle*{2}}
\put(30,10){\circle*{2}}
\put(30,20){\circle*{2}}

\put(10,10){\line(1,0){10}}
\put(20,10){\line(1,0){10}}
\put(10,20){\line(1,0){10}}
\put(20,20){\line(1,0){10}}
\put(10,30){\line(1,0){10}}
\put(10,10){\line(0,1){10}}
\put(10,20){\line(0,1){10}}
\put(20,10){\line(0,1){10}}
\put(20,20){\line(0,1){10}}
\put(30,10){\line(0,1){10}}
  }
\newsavebox{\gfive}
\savebox{\gfive}
  (40,40)[bl]
  {
  
\put(0,0){\usebox{\ggfour}}

\put(24,14){\circle*{2}}
\put(24,24){\circle*{2}}
\put(24,34){\circle*{2}}
\put(34,14){\circle*{2}}
\put(34,24){\circle*{2}}
\put(24,14){\line(1,0){10}}
\put(24,24){\line(1,0){10}}
\put(24,14){\line(0,1){10}}
\put(24,24){\line(0,1){10}}
\put(34,14){\line(0,1){10}}

\put(20,10){\line(1,1){4}}
\put(20,20){\line(1,1){4}}
\put(20,30){\line(1,1){4}}
\put(30,10){\line(1,1){4}}
\put(30,20){\line(1,1){4}}
  }
 \newsavebox{\gsix}
\savebox{\gsix}
  (40,40)[bl]
  {
 \put(0,0){\usebox{\gfive}}

\put(6,6){\circle*{2}}
\put(6,16){\circle*{2}}
\put(6,26){\circle*{2}}
\put(16,6){\circle*{2}}
\put(16,16){\circle*{2}}
\put(16,26){\circle*{2}}
\put(26,6){\circle*{2}}
\put(26,16){\circle*{2}}
\put(6,6){\line(1,0){10}}
\put(16,6){\line(1,0){10}}
\put(6,16){\line(1,0){10}}
\put(16,16){\line(1,0){10}}
\put(6,26){\line(1,0){10}}
\put(6,6){\line(0,1){10}}
\put(6,16){\line(0,1){10}}
\put(16,6){\line(0,1){10}}
\put(16,16){\line(0,1){10}}
\put(26,6){\line(0,1){10}}

\put(6,6){\line(1,1){4}}
\put(6,16){\line(1,1){4}}
\put(6,26){\line(1,1){4}}
\put(16,6){\line(1,1){4}}
\put(16,16){\line(1,1){4}}
\put(16,26){\line(1,1){4}}
\put(26,6){\line(1,1){4}}
\put(26,16){\line(1,1){4}}

\put(14,11){$000001$}
\put(14,21){$000000$}
\put(14,31){$000010$}
\put(24,21){$000100$}
\put(4,21) {$001000$}
\put(18,25){$010000$}
\put(10,17){$100000$}
\put(4,11) {$001001$}
\put(4,31) {$001010$}
\put(-1,7){$101001$}
\put(-1,17){$101000$}
\put(-1,27){$101010$}
\put(10,7){$100001$}
\put(10,27){$100010$}
\put(18,35){$010010$}
\put(24,11){$000101$}
\put(35,25){$010100$}
\put(35,15){$010101$}
\put(18,15){$010001$}
\put(20,17){$100100$}
\put(20,7){$100101$}

  }
  
\put(0,-5){\usebox{\gfive}}

\put(14,6){$00001$}
\put(14,16){$00000$}
\put(14,26){$00010$}
\put(25,16){$00100$}
\put(4,16){$01000$}
\put(25,20){$10000$}
\put(4,26){$01010$}
\put(4,6){$01001$}
\put(25,10){$10001$}
\put(25,06){$00101$}
\put(35,20){$10100$}
\put(35,10){$10101$}
\put(25,30){$10010$}

\put(40,-5){\usebox{\gsix}}
\end{picture}
\caption{$\Gamma_5$ and $\Gamma_6$}
\end{figure}

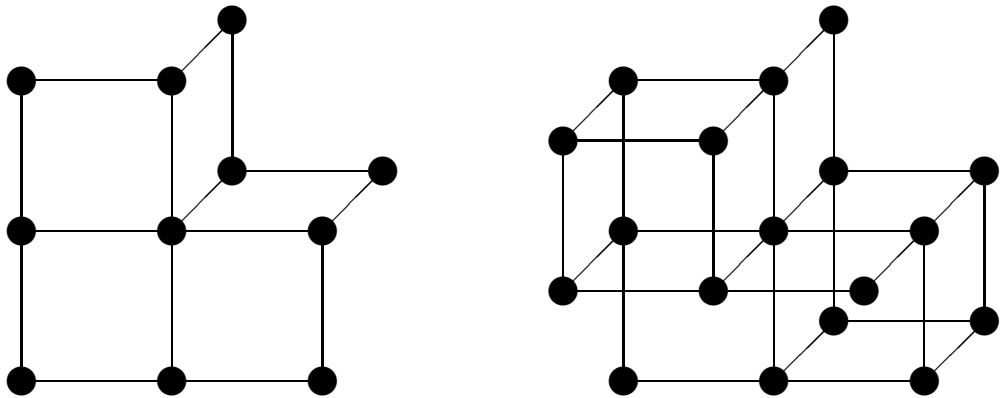
\begin{figure}[!p]
\centering
\setlength{\unitlength}{2mm}
\begin{picture}(80, 35)
 \newsavebox{\gggfour}
\savebox{\gggfour}
  (40,40)[bl]
  {
 
\put(10,10){\circle*{2}}
\put(10,20){\circle*{2}}
\put(10,30){\circle*{2}}
\put(20,10){\circle*{2}}
\put(20,20){\circle*{2}}
\put(20,30){\circle*{2}}
\put(30,10){\circle*{2}}
\put(30,20){\circle*{2}}

\put(10,10){\line(1,0){10}}
\put(20,10){\line(1,0){10}}
\put(10,20){\line(1,0){10}}
\put(20,20){\line(1,0){10}}
\put(10,30){\line(1,0){10}}
\put(10,10){\line(0,1){10}}
\put(10,20){\line(0,1){10}}
\put(20,10){\line(0,1){10}}
\put(20,20){\line(0,1){10}}
\put(30,10){\line(0,1){10}}
  }
\newsavebox{\lfive}
\savebox{\lfive}
  (40,40)[bl]
  {
  
\put(0,0){\usebox{\gggfour}}

\put(24,24){\circle*{2}}
\put(24,34){\circle*{2}}
\put(34,24){\circle*{2}}
\put(24,24){\line(1,0){10}}
\put(24,24){\line(0,1){10}}

\put(20,20){\line(1,1){4}}
\put(20,30){\line(1,1){4}}
\put(30,20){\line(1,1){4}} 
 }
 \newsavebox{\lsix}
\savebox{\lsix}
  (40,40)[bl]
  {
 \newsavebox{\gggfive}
\savebox{\gggfive}
  (40,40)[bl]
  {
  
\put(0,0){\usebox{\gggfour}}

\put(24,14){\circle*{2}}
\put(24,24){\circle*{2}}
\put(24,34){\circle*{2}}
\put(34,14){\circle*{2}}
\put(34,24){\circle*{2}}
\put(24,14){\line(1,0){10}}
\put(24,24){\line(1,0){10}}
\put(24,14){\line(0,1){10}}
\put(24,24){\line(0,1){10}}
\put(34,14){\line(0,1){10}}

\put(20,10){\line(1,1){4}}
\put(20,20){\line(1,1){4}}
\put(20,30){\line(1,1){4}}
\put(30,10){\line(1,1){4}}
\put(30,20){\line(1,1){4}}
  }
 \put(0,0){\usebox{\gggfive}}

\put(6,16){\circle*{2}}
\put(6,26){\circle*{2}}
\put(16,16){\circle*{2}}
\put(16,26){\circle*{2}}
\put(26,16){\circle*{2}}
\put(6,16){\line(1,0){10}}
\put(16,16){\line(1,0){10}}
\put(6,26){\line(1,0){10}}
\put(6,16){\line(0,1){10}}
\put(16,16){\line(0,1){10}}

\put(6,16){\line(1,1){4}}
\put(6,26){\line(1,1){4}}
\put(16,16){\line(1,1){4}}
\put(16,26){\line(1,1){4}}
\put(26,16){\line(1,1){4}}
  }
  
\put(0,-5){\usebox{\lfive}}
\put(40,-5){\usebox{\lsix}}
\end{picture}
\caption{$\Lambda_5$ and $\Lambda_6$}
\end{figure}

A {\em Fibonacci string} of length $n$ is a binary string
$b_1b_2\ldots b_n$ with $b_ib_{i+1}=0$ for $1\leq i<n$. 
The {\em Fibonacci cube} $\Gamma_n$ ($n\geq 1$) is the subgraph 
of $Q_n$ induced by the Fibonacci strings of length $n$. 
For convenience we also consider the empty string and set $\Gamma_0 = K_1$. 
Call a Fibonacci string $b_1b_2\ldots b_n$ a 
{\em Lucas string} if $b_1b_n \neq 1$. Then the {\em Lucas cube} 
$\Lambda_n$ ($n\geq 1$) is the subgraph of $Q_n$ induced by 
the Lucas strings of length $n$. We also set $\Lambda_0=K_1$.\\
Since their introduction  $\Gamma_n$ and $\Lambda_n$ have been also studied for their graph theory properties and found other applications, for example in chemistry (see the survey \cite{Klavzarsurvey}). Recently different enumerative sequences of these graphs have been determined. Among them: number of vertices of a given degree\cite{KlavzarDegree}, number of vertices of a given eccentricity\cite{Castro}, number of pair of vertices at a given distance\cite{KlavzarWiener} or number of isometric subgraphs isomorphic so some $Q_k$\cite{KlavzarCube}. The counting polynomial of this last sequence is known as cubic polynomial and has very nice properties\cite{cubepolmed}.

We propose to study an other enumeration and characterization problem.
For a given interconnection topology it is important to characterize maximal hypercubes, for example from the point of view of embeddings.
So let us consider {\em maximal hypercubes of dimension $p$}, i.e. induced subgraphs $H$  of $\Gamma_n$ (respectively  $\Lambda_n$) that are isomorphic to  $Q_p$, and such that there exists no induced subgraph $H'$ of $\Gamma_n$ (respectively  $\Lambda_n$), $H\subset H'$, isomorphic to $Q_{p+1}$.

Let $f_{n,p}$ and $g_{n,p}$ be the  numbers of maximal hypercubes of dimension $p$ of $\Gamma_n$, respectively  $\Lambda_n$, and $C'(\Gamma_{n},x)=\sum_{p=0}^{\infty}{f_{n,p}x^p}$, respectively $C'(\Lambda_{n},x)\sum_{p=0}^{\infty}{g_{n,p}x^p}$, their counting polynomials.

By direct inspection, see figures 1 to 3, we obtain the first of them:
\begin{eqnarray*}
C'(\Gamma_{0},x) & = & 1\,\:\ \ \ \  \ \  \ \ \ \ C'(\Lambda_{0},x)  =  1\, \\ 
C'(\Gamma_{1},x) & = & x\,\  \  \ \  \ \  \ \ \ \ C'(\Lambda_{1},x)  =  1\, \\ 
C'(\Gamma_{2},x) & = & 2x\,\  \ \  \ \  \ \ \ \ C'(\Lambda_{2},x)  =  2x \\
C'(\Gamma_{3},x) & = & x^2+x\,\  \ \ \     C'(\Lambda_{3},x)  =  3x\, \\ 
C'(\Gamma_{4},x) & = & 3x^2\,\  \   \ \    \ \ \ \ C'(\Lambda_{4},x)  =  2x^2\, \\ 
C'(\Gamma_{5},x) & = & x^3+3x^2\,\  C'(\Lambda_{5},x)  =  5x^2 \\ 
C'(\Gamma_{6},x) & = & 4x^3+x^2\,\  C'(\Lambda_{6},x)  =  2x^3+3x^2 \\ 
\end{eqnarray*}

The intersection graph of maximal hypercubes (also called cube graph) in a graph have been studied by various authors, for example in the context of median graphs\cite{Bresar}. Hypercubes playing a role similar to cliques in clique graph.
Nice result have been obtained on cube graph of median graphs, and it is thus of interest, from the graph theory point of view, to characterize maximal hypercubes in families of graphs and thus obtain non trivial examples of such graphs.
We will first  characterize maximal induced hypercubes in  $\Gamma_n$ and $\Lambda_n$ and then deduce the number of maximal $p$-dimensional hypercubes in these graphs.
\section{Main results}

For any vertex $x=x_1\dots x_n$ of $Q_n$ and any $i\in\{1,\dots,n\}$ let $x+\epsilon_i$ be the vertex of  $Q_n$ defined by $(x+\epsilon_i)_i= 1-x_i$ and $(x+\epsilon_i)_j= x_j$ for $j\neq i$.

Let $H$ be an induced subgraph of $Q_n$ isomorphic to some $Q_k$. The \emph{support} of $H$ is the subset set of $\{1\dots n\}$ defined by $Sup(H)=\{i/\: \exists\: x,y\in V(H)$ with $x_i\neq y_i\}$. Let $i\notin Sup(H)$, we will denote by $H\widetilde{+}\epsilon_i$ the subgraph induced by $V(H)\cup\{x+\epsilon_i/x\in V(H)\}$. Note that $H\widetilde{+}\epsilon_i$ is isomorphic to $Q_{k+1}$.

The following result is well known\cite{Klavzarnbhyper}.

\begin{proposition}\label{pro:bt}
In every induced subgraph $H$ of $Q_n$ isomorphic to $Q_k$ there exists a unique vertex of minimal weight, \emph{the bottom vertex} $b(H)$. There exists also a unique vertex of maximal weight, the \emph{top vertex} $t(H)$. Furthermore $b(H)$ and $t(H)$ are at distance $k$ and characterize $H$ among the subgraphs of $Q_n$ isomorphic to $Q_k$. 
\end{proposition}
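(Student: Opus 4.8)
\medskip
\noindent\textbf{Proof idea.} The plan is to reduce the statement to one structural fact: an induced subgraph $H$ of $Q_n$ isomorphic to $Q_k$ is always a \emph{coordinate subcube}, meaning there is a $k$-element set $S\subseteq\{1,\dots,n\}$ and values $c_j\in\{0,1\}$ for $j\notin S$ such that $V(H)=\{x\in V(Q_n):x_j=c_j\text{ for all }j\notin S\}$. Everything claimed in the Proposition is immediate once this is known, so this is where the work — and the only real obstacle — lies.

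To establish it I would show that the edges of $H$ are coherently directed. Since $H$ is induced in $Q_n$, each of its edges flips exactly one coordinate of $Q_n$; since $H\cong Q_k$, its edges fall into $k$ parallel classes. First, in any $4$-cycle of $Q_n$ the flipped coordinates, read cyclically, must form a pattern $a,b,a,b$ with $a\neq b$ (the walk has to close up and the four vertices are distinct), so opposite edges flip the same coordinate. Second, any two edges of a given parallel class of $Q_k$ are connected by a chain of edges of that class in which consecutive pairs, completed by two edges of a second class, bound a square face of $Q_k$; each such face is a $4$-cycle of $H$ and hence of $Q_n$, and the two class-edges are opposite in it, so by the first point all edges of the chain flip the same coordinate of $Q_n$. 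This yields a well-defined map from the $k$ parallel classes to $\{1,\dots,n\}$, injective because the $k$ edges of $H$ at any fixed vertex $v$ flip $k$ distinct coordinates; let $S$ be its image, so $|S|=k$. Any $u\in V(H)$ is joined to $v$ by a path in $H$ all of whose edges flip coordinates in $S$, hence $u$ agrees with $v$ off $S$; thus $V(H)$ is contained in the coordinate subcube with support $S$ and fixed values $c_j:=v_j$ $(j\notin S)$, and the two sets coincide since each has $2^k$ vertices. In particular $Sup(H)=S$. (Essentially the same bookkeeping runs as an induction on $k$, writing $H$ as two copies of $Q_{k-1}$ joined by a perfect matching and showing this matching uses a single direction outside the common support.)

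Granting the structural fact, the Proposition follows by inspection: on $V(H)=\{x:x_j=c_j,\ j\notin S\}$ the weight varies only through the $k$ coordinates of $S$, so there is a unique vertex of minimal weight, $b(H)$, carrying $0$ on all of $S$, and a unique vertex of maximal weight, $t(H)$, carrying $1$ on all of $S$; they differ in exactly the coordinates of $S$, so they are at distance $k$. Conversely a pair of vertices $b\leq t$ (coordinatewise) at Hamming distance $k$ recovers $S=\{i:b_i\neq t_i\}$ and the common values off $S$, hence determines a unique $H$ of the above form with that bottom and top vertex — and every such pair is realized — which is exactly the claimed characterization.
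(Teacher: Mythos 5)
Your proof is correct and complete. The paper gives no proof of this proposition at all (it simply cites it as well known), but the structural fact you establish --- that an induced $Q_k$ in $Q_n$ is a coordinate subcube --- is exactly what the paper derives right afterwards via the same mechanism (closure of squares plus connectivity, by induction on $k$) to obtain the refined Proposition~\ref{pro:btprec}; so your route coincides with the paper's, merely written out in full.
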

We can precise this result.
A basic property of hypercubes is that if $x, x+\epsilon_i, x+\epsilon_j$ are vertices of $H$ then $x+\epsilon_i+\epsilon_j$ must be a vertex of $H$. By connectivity we deduce that if $x, x+\epsilon_i$ and $y$ are vertices of $H$ then  $y+\epsilon_i$ must be also a vertex of $H$. We have
thus by induction on $k$:

\begin{proposition}\label{pro:btprec}
if $H$ is an induced subgraph of  $Q_n$ isomorphic to $Q_k$ then
\begin{wlist}
  \item[(i)]$|Sup(H)|=k$
	\item[(ii)]$\text{If } i\notin Sup(H) \text{ then }\forall x\in V(H) \ \;x_i=b(H)_i=t(H)_i$
  \item[(iii)]$\text{If } i\in Sup(H) \text{ then }b(H)_i=0 \text{ and } t(H)_i=1 $
  \item[(iv)]$V(H)= \{x=x_1\dots x_n/ \;\forall i\notin Sup(H)\ x_i=b(H)_i\}.$

 \end{wlist} 
\end{proposition}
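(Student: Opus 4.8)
The plan is to reduce all four items to a single closure property of $V(H)$ under coordinate flips, and then read them off in the order (ii), the closure property, (iv), (i), (iii).

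Part (ii) needs no work: if $i\notin Sup(H)$ then by definition no two vertices of $H$ differ in coordinate $i$, so there is a common value $c$ with $x_i=c$ for every $x\in V(H)$; in particular $b(H)_i=t(H)_i=c$. The heart of the matter is the claim $(\star)$: for every $i\in Sup(H)$ the map $w\mapsto w+\epsilon_i$ sends $V(H)$ into $V(H)$. To prove it, first note that since $i\in Sup(H)$ there are $u,v\in V(H)$ with $u_i\neq v_i$, and a path from $u$ to $v$ in the connected graph $H$ must use an edge that flips coordinate $i$; hence some edge $z\sim z+\epsilon_i$ lies in $H$. Now given an arbitrary $w\in V(H)$, take a path $z=z^0,z^1,\dots,z^m=w$ in $H$ and show by induction on $t$ that $z^t+\epsilon_i\in V(H)$: the step is immediate when the $t$-th edge flips coordinate $i$, and otherwise it is exactly the square property (if $x,x+\epsilon_i,x+\epsilon_j\in V(H)$ then $x+\epsilon_i+\epsilon_j\in V(H)$) applied at $x=z^t$. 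Taking $t=m$ gives $(\star)$.

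From $(\star)$, $V(H)$ is closed under adding $\sum_{i\in T}\epsilon_i$ for every $T\subseteq Sup(H)$; applying this to $b(H)\in V(H)$ produces every string that agrees with $b(H)$ outside $Sup(H)$, which is the inclusion $\supseteq$ in (iv), while $\subseteq$ is immediate from (ii). Counting the two sides of (iv) yields $2^{k}=|V(H)|=2^{|Sup(H)|}$, that is (i). For (iii), if $i\in Sup(H)$ and $b(H)_i=1$ then $b(H)+\epsilon_i\in V(H)$ by $(\star)$ and has weight $w(b(H))-1$, contradicting the minimality of $b(H)$ from Proposition~\ref{pro:bt}; hence $b(H)_i=0$, and the symmetric argument at $t(H)$ gives $t(H)_i=1$.

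The only delicate point is $(\star)$, and inside it the square property itself: this is exactly where one uses that $H$ is an \emph{induced} subgraph isomorphic to $Q_k$, so that two vertices of $H$ at distance two have precisely two common neighbours in $H$, forcing the fourth corner of the square to belong to $V(H)$; once this and the connectedness of $H$ are in hand, the rest is bookkeeping. An essentially equivalent route is induction on $k$: fix $i\in Sup(H)$, use $(\star)$ to write $V(H)$ as the disjoint union of $V(H')$ and $V(H')+\epsilon_i$ for a subcube $H'\cong Q_{k-1}$ with $Sup(H')=Sup(H)\setminus\{i\}$ and $b(H')=b(H)$ (the latter since, were $b(H)_i=1$, the vertex $b(H)+\epsilon_i$ would have smaller weight), and then apply the induction hypothesis to $H'$.
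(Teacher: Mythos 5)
Your proof is correct and follows essentially the same route as the paper: the square property of induced subcubes combined with connectivity yields the transport lemma (if some edge of $H$ flips coordinate $i$, then $V(H)$ is closed under $w\mapsto w+\epsilon_i$), from which all four items follow. The paper states exactly this argument, only more tersely (and phrases the final bookkeeping as an induction on $k$, which you also note as an equivalent variant).
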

If $H$ is an induced subgraph of $\Gamma_n$, or $\Lambda_n$, then, as a set of strings of length $n$, it defines also an induced subgraph of $Q_n$; thus Propositions \ref{pro:bt} and \ref{pro:btprec} are still true for induced subgraphs of Fibonacci or Lucas cubes.
 
A Fibonacci string can be view as blocks of $0$'s separated by isolated $1$'s, or as isolated $0$'s possibly separated by isolated $1$'s.
These two points of view give the two following decompositions of the vertices of $\Gamma_n$. 
\begin{proposition}\label{pro:dec0}
 Any vertex of weight $w$ from $\Gamma_n$ can be uniquely decomposed as $0^{l_0}10^{l_1}\dots 10^{l_i}\dots10^{l_{p}}$ where 
 $p=w$; $\sum_{i=0}^p{l_i}=n-w$;  $l_0,l_p \geq 0$ and $l_1,\dots,l_{p-1}\geq1$.
 \end{proposition}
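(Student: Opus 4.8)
The plan is to prove existence and uniqueness together by writing the decomposition explicitly in terms of the positions of the $1$'s in the string, and then noting that this correspondence is reversible.

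First I would dispose of the trivial case $w=0$: then the vertex is $0^n$, and the only admissible decomposition is $p=0$, $l_0=n=n-w$, which meets all the constraints. So assume $w\geq 1$ and let $1\leq j_1<j_2<\dots<j_w\leq n$ be the positions of the $1$'s. The Fibonacci condition $b_ib_{i+1}=0$ for $1\leq i<n$ is exactly the requirement that no two of the $j_k$ be consecutive, i.e. $j_{k+1}\geq j_k+2$ for $1\leq k\leq w-1$. I would then set $p=w$, $l_0=j_1-1$, $l_k=j_{k+1}-j_k-1$ for $1\leq k\leq w-1$, and $l_p=n-j_w$. By construction the string equals $0^{l_0}10^{l_1}\dots10^{l_p}$; moreover $l_0\geq 0$ and $l_p\geq 0$ because $1\leq j_1$ and $j_w\leq n$, while $l_k\geq 1$ for $1\leq k\leq p-1$ by the gap condition, and the sum $\sum_{k=0}^{p}l_k=(j_1-1)+\sum_{k=1}^{p-1}(j_{k+1}-j_k-1)+(n-j_w)$ telescopes to $n-1-(p-1)=n-w$. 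This gives existence.

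For uniqueness I would recover the data $(p;l_0,\dots,l_p)$ from the word. In $0^{l_0}10^{l_1}\dots10^{l_p}$ the number of $1$'s is $p$, forcing $p=w$, and the $k$-th $1$ from the left sits in position $l_0+\dots+l_{k-1}+k$; hence the positions $j_1,\dots,j_w$ determine $l_0=j_1-1$, $l_k=j_{k+1}-j_k-1$ for $1\leq k\leq w-1$, and $l_p=n-j_w$, with no freedom. Equivalently, the assignment just described and the map taking a constrained tuple to the word $0^{l_0}10^{l_1}\dots10^{l_p}$ are mutually inverse bijections between the weight-$w$ Fibonacci strings of length $n$ and the tuples with $p=w$, $\sum_i l_i=n-w$, $l_0,l_p\geq 0$, $l_1,\dots,l_{p-1}\geq 1$; one only checks that such a word indeed has length $\sum_i l_i+p=(n-w)+w=n$ and, the interior blocks being nonempty, contains no factor $11$, so it lies in $\Gamma_n$.

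There is no real obstacle here: the statement is a bookkeeping translation between ``the list of positions of the $1$'s'' and ``the list of gap lengths''. The only points needing care are the boundary blocks $l_0$ and $l_p$, which may be empty, as opposed to the interior blocks, which may not; the degenerate case $w=0$; and the routine verification that the telescoping sum equals $n-w$ and that the reconstructed word has length $n$ and avoids $11$.
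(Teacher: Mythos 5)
Your proof is correct, but it takes a different route from the paper. The paper disposes of Propositions \ref{pro:dec0} and \ref{pro:dec1} together in one line, by induction on $n$ using the fundamental recursive structure of Fibonacci strings: every vertex of $\Gamma_n$ ($n\geq 2$) is either a vertex of $\Gamma_{n-1}$ followed by $0$ or a vertex of $\Gamma_{n-2}$ followed by $01$, and the claimed block decomposition propagates through this recursion. You instead argue directly and non-inductively, setting up an explicit bijection between the set of $1$-positions $j_1<\dots<j_w$ (constrained by $j_{k+1}\geq j_k+2$) and the tuple of gap lengths $(l_0,\dots,l_p)$, with the telescoping sum giving $\sum l_i=n-w$ and the inverse map giving uniqueness. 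Your version is longer but more self-contained and makes the ``stars and bars''--type correspondence completely explicit, which is in fact the mechanism silently used later in the counting arguments of Theorems \ref{th:fp} and \ref{th:gp}; the paper's version is shorter and reuses the recursion that organizes the whole family of results. Both are complete proofs; your handling of the boundary cases ($w=0$, and the possibly empty outer blocks $l_0,l_p$ versus the nonempty interior blocks) is careful and correct.
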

\begin{proposition}\label{pro:dec1}
 Any vertex of weight $w$ from $\Gamma_n$ can be uniquely decomposed as $1^{k_0}01^{k_1}\dots 01^{k_i}\dots01^{k_{q}}$ where 
$q=n-w$; $\sum_{i=0}^q{k_i}=w$  and $k_0,\dots,k_{q}\leq1$.
 \end{proposition}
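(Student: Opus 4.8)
The plan is to read the decomposition directly off the positions of the $0$'s of the string, dualizing the argument behind Proposition~\ref{pro:dec0} (which instead reads off the positions of the isolated $1$'s). Let $u=b_1b_2\ldots b_n$ be a Fibonacci string of weight $w$, so that $u$ has exactly $q:=n-w$ coordinates equal to $0$; write their positions as $1\le j_1<j_2<\cdots<j_q\le n$ and, for uniformity, set $j_0=0$ and $j_{q+1}=n+1$. Putting $k_i=j_{i+1}-j_i-1$ for $0\le i\le q$, one checks immediately that $u=1^{k_0}01^{k_1}0\cdots01^{k_q}$ and that $\sum_{i=0}^{q}k_i=(n+1)-q-1=n-q=w$. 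The defining condition $b_ib_{i+1}=0$ for $1\le i<n$ says exactly that $u$ has no factor $11$, and the maximal runs of consecutive $1$'s in $u$ are precisely the blocks $1^{k_0},1^{k_1},\ldots,1^{k_q}$ (any of which may be empty), so this condition is equivalent to $k_i\le 1$ for every $i$. This yields existence together with the three asserted properties.

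For uniqueness I would argue that the map $(k_0,\ldots,k_q)\mapsto 1^{k_0}01^{k_1}0\cdots01^{k_q}$ is injective on tuples of nonnegative integers: from the resulting string one recovers $q$ as the number of its $0$'s, then the positions $j_1<\cdots<j_q$ of those $0$'s, and then each $k_i$ from the displayed formula; hence any two decompositions of $u$ of the prescribed shape must coincide. I do not expect any genuine obstacle here — the statement is essentially a run-length encoding of Fibonacci strings — the only point worth a line of care is that both extreme blocks $1^{k_0}$ and $1^{k_q}$ are allowed to be empty, so that one only gets $k_0,k_q\ge 0$ and not a positive lower bound (unlike the inner $0$-blocks in Proposition~\ref{pro:dec0}). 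An alternative route would be induction on $n$ via the recursive structure $\Gamma_n=\Gamma_{n-1}\cup\Gamma_{n-2}$, but the direct reading above is shorter and makes the bijective nature of the decomposition transparent.
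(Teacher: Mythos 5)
Your proof is correct, but it takes a different route from the paper. The paper disposes of Propositions~\ref{pro:dec0} and \ref{pro:dec1} together in one line, by induction on $n$ using the recursive structure of Fibonacci strings (every vertex of $\Gamma_n$, $n\ge 2$, is a vertex of $\Gamma_{n-1}$ followed by $0$ or a vertex of $\Gamma_{n-2}$ followed by $01$) --- precisely the alternative you mention and set aside at the end. Your argument is instead a direct run-length encoding: you locate the $q=n-w$ zeros at positions $j_1<\cdots<j_q$, define $k_i=j_{i+1}-j_i-1$ with the sentinels $j_0=0$, $j_{q+1}=n+1$, verify $\sum k_i=w$ by telescoping, and observe that the Fibonacci condition (no factor $11$) is exactly $k_i\le 1$ for all $i$; uniqueness follows because the string determines the positions of its zeros. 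All of this is sound, including your correct handling of the possibly empty boundary blocks $1^{k_0}$ and $1^{k_q}$. What your approach buys is an explicit, self-contained bijection between weight-$w$ Fibonacci strings and $(q+1)$-tuples in $\{0,1\}$ summing to $w$, which is the fact actually exploited later (Proposition~\ref{pro:nbw} and the counting arguments in Theorems~\ref{th:fp} and~\ref{th:gp}); the paper's inductive one-liner is shorter but leaves that bijective content implicit.
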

\begin{proof}
A vertex from $\Gamma_n$, $n\geq2$ being the concatenation of a string of $V(\Gamma_{n-1})$ with $0$ or a string of $V(\Gamma_{n-2})$ with $01$, both properties are easily proved by induction on $n$.

\end{proof}
\qed

Using the the second decomposition, the vertices of weight $w$ from $\Gamma_n$ are thus obtained by choosing, in $\{0,1,\dots,q\}$, the $w$ values of $i$ such that $k_{i}=1$ in . We have then the classical result:
\begin{proposition}\label{pro:nbw}
For any $w\leq n$ the number of vertices of weight $w$ in $\Gamma_n$ is $\binom{n-w+1}{w}$.
\end{proposition}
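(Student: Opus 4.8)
The plan is to read off the count directly from the second decomposition established in Proposition~\ref{pro:dec1}, with no further work beyond bookkeeping. Set $q=n-w$. By Proposition~\ref{pro:dec1}, a vertex of weight $w$ in $\Gamma_n$ is the same data as a tuple $(k_0,k_1,\dots,k_q)$ with each $k_i\in\{0,1\}$ and $\sum_{i=0}^q k_i=w$; the proposition guarantees that the string $1^{k_0}01^{k_1}\cdots 01^{k_q}$ is a well-defined Fibonacci string, that distinct admissible tuples yield distinct strings, and that every weight-$w$ vertex arises this way. Hence the assignment sending a weight-$w$ vertex to the set $\{i\in\{0,1,\dots,q\}\,:\,k_i=1\}$ is a bijection from the weight-$w$ vertices of $\Gamma_n$ onto the family of $w$-element subsets of a $(q+1)$-element set.

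First I would invoke Proposition~\ref{pro:dec1} to obtain this bijection with the $w$-subsets of $\{0,1,\dots,n-w\}$, and then simply count those subsets, getting $\binom{(n-w)+1}{w}=\binom{n-w+1}{w}$, which is the assertion. I would add a sentence checking the degenerate ranges: if $2w>n+1$ then $w>n-w+1$, so there is no such subset and $\binom{n-w+1}{w}=0$, consistent with the fact that a Fibonacci string of length $n$ cannot have weight exceeding $\lfloor (n+1)/2\rfloor$; and the cases $w=0$ and $n=0$ are immediate.

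There is essentially no obstacle, since all the substance is already contained in Proposition~\ref{pro:dec1}; the only point requiring a word of care is that the correspondence is genuinely a bijection (well-definedness, injectivity, surjectivity of the decomposition), which is precisely what Proposition~\ref{pro:dec1} provides. As an alternative route of equal ease, one could instead argue by induction on $n$ using the partition $V(\Gamma_n)=V(\Gamma_{n-1})0\ \sqcup\ V(\Gamma_{n-2})01$: writing $a_{n,w}$ for the number of weight-$w$ vertices, this gives the recurrence $a_{n,w}=a_{n-1,w}+a_{n-2,w-1}$, and one then verifies that $\binom{n-w+1}{w}$ satisfies the same recurrence by Pascal's identity $\binom{n-w+1}{w}=\binom{n-w}{w}+\binom{n-w}{w-1}$, together with the base cases $n\in\{0,1\}$.
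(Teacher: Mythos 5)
Your proof is correct and follows the paper's own route: the paper likewise derives the count directly from the second decomposition (Proposition~\ref{pro:dec1}) by choosing the $w$ indices $i\in\{0,1,\dots,q\}$ with $k_i=1$, giving $\binom{n-w+1}{w}$. Your extra remarks on bijectivity and the degenerate range are fine but not needed beyond what the paper states.
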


Considering the constraint on the extremities of a Lucas string we obtain the two following decompositions of the vertices of $\Lambda_n$.

\begin{proposition}\label{pro:declucas0}
 Any vertex of weight $w$ in $\Lambda_n$ can be uniquely decomposed as $0^{l_0}10^{l_1}\dots 10^{l_i}\dots 10^{l_{p}}$ where 
 $p=w$, $\sum_{i=0}^p{l_i}=n-w$,  $l_0,l_p\geq0$, $l_0+l_p\geq1$ and $l_1,\dots,l_{p-1}\geq1$.
 \end{proposition}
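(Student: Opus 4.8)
The plan is to reduce the statement to Proposition~\ref{pro:dec0}: a Lucas string is in particular a Fibonacci string, so the required decomposition already exists, and only the extra boundary condition has to be identified. First I would observe that any vertex $b=b_1b_2\dots b_n$ of weight $w$ in $\Lambda_n$ (with $n\geq 1$) is a Fibonacci string of length $n$; hence by Proposition~\ref{pro:dec0} it has a unique decomposition $b=0^{l_0}10^{l_1}\dots10^{l_p}$ with $p=w$, $\sum_{i=0}^{p}l_i=n-w$, $l_0,l_p\geq 0$ and $l_1,\dots,l_{p-1}\geq 1$, and conversely every string of this form lies in $\Gamma_n$. Since uniqueness of the decomposition is inherited from Proposition~\ref{pro:dec0}, it remains only to show that, among these strings (for fixed $n,w$), the ones belonging to $\Lambda_n$ are exactly those with $l_0+l_p\geq 1$.

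The key step is to translate the Lucas condition $b_1b_n\neq 1$. Assume first $p=w\geq 1$. The string $0^{l_0}10^{l_1}\dots10^{l_p}$ starts with a $1$ exactly when $l_0=0$, so $b_1=1$ if and only if $l_0=0$; likewise it ends with a $1$ exactly when $l_p=0$, so $b_n=1$ if and only if $l_p=0$. Therefore $b_1b_n=1$ holds if and only if $l_0=l_p=0$, i.e.\ if and only if $l_0+l_p=0$; equivalently $b$ is a Lucas string if and only if $l_0+l_p\geq 1$. Together with the constraints coming from Proposition~\ref{pro:dec0} this yields precisely $p=w$, $\sum_{i=0}^{p}l_i=n-w$, $l_0,l_p\geq 0$, $l_0+l_p\geq 1$ and $l_1,\dots,l_{p-1}\geq 1$.

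Finally I would dispose of the degenerate case $p=w=0$: here $b=0^n$, which is a Lucas string, the decomposition is the single block $0^{l_0}$ with $l_0=l_p=n$, and $l_0+l_p\geq 1$ reduces to $n\geq 1$, which holds. (The convention $\Lambda_0=K_1$ makes the case $n=0$ vacuous.) I do not expect a genuine obstacle; the only point requiring care is the bookkeeping of the two end blocks $l_0$ and $l_p$ — in particular that they coincide when $p=0$ — and stating cleanly the equivalences $b_1=1\Leftrightarrow l_0=0$ and $b_n=1\Leftrightarrow l_p=0$ that drive the argument.
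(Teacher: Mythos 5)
Your proposal is correct and follows exactly the route the paper intends: the paper states this proposition without a written proof, merely noting that it follows from Proposition~\ref{pro:dec0} by ``considering the constraint on the extremities of a Lucas string,'' which is precisely your translation of $b_1b_n\neq 1$ into $l_0+l_p\geq 1$ (with the $p=0$ case checked separately). Nothing further is needed.
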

 
\begin{proposition}\label{pro:declucas1}
 Any vertex of weight $w$ in $\Lambda_n$ can be uniquely decomposed as $1^{k_0}01^{k_1}\dots 01^{k_i}\dots01^{k_{q}}$ where $q=n-w$; $\sum_{i=0}^q{k_i}=w$; $k_0+k_q\leq1$ and $k_0,\dots,k_{q}\leq1$.
 \end{proposition}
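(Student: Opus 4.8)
\proof
The plan is to read this decomposition off from the corresponding decomposition of Fibonacci strings, Proposition~\ref{pro:dec1}, and then to translate the single extra condition that separates Lucas strings from Fibonacci strings into a condition on the exponents $k_i$. So the first step is: given a vertex $u=b_1b_2\dots b_n$ of weight $w$ in $\Lambda_n$, use that $u$ is in particular a Fibonacci string to obtain, via Proposition~\ref{pro:dec1}, its unique decomposition $1^{k_0}01^{k_1}\dots01^{k_q}$ with $q=n-w$, $\sum_{i=0}^q k_i=w$ and $k_0,\dots,k_q\le 1$.

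The key step is then to check that, under this decomposition, the defining requirement $b_1b_n\neq1$ of a Lucas string is exactly $k_0+k_q\le1$. I would argue that for $n\ge1$ the leftmost symbol $b_1$ equals $1$ if and only if $k_0=1$ (the string starts with the block $1^{k_0}$, immediately followed by a $0$ when $q\ge1$), and symmetrically $b_n=1$ if and only if $k_q=1$. Hence $b_1b_n=1$ precisely when $k_0=k_q=1$; since $k_0,k_q\in\{0,1\}$, this is the same as $k_0+k_q=2$, i.e. as $k_0+k_q>1$. Thus $u$ is a Lucas string if and only if $k_0+k_q\le1$. The converse direction and uniqueness come for free: any string $1^{k_0}01^{k_1}\dots01^{k_q}$ with the stated numerical constraints is a Fibonacci string with $b_1b_n\neq1$, hence a Lucas string of length $n$ and weight $w$, and uniqueness of the exponents is inherited from Proposition~\ref{pro:dec1}.

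I do not expect a serious obstacle; the only delicate point is the bookkeeping in the degenerate cases $k_0=0$, $k_q=0$ or $q=0$, which must be handled with the convention that $1^0$ denotes the empty string (one should check, e.g., that the equivalence $b_1b_n\ne1\Leftrightarrow k_0+k_q\le1$ remains consistent with $\Lambda_1=K_1$ having only the vertex $0$). An alternative to going through Proposition~\ref{pro:dec1} would be to mimic the induction on $n$ used for Propositions~\ref{pro:dec0} and \ref{pro:dec1}, writing a Lucas string of length $n$ as a Fibonacci string of length $n-1$ ending in $0$, or as a Fibonacci string of length $n-2$ ending in $01$, while carrying the boundary condition through the recursion; this also works but keeps track of the two end blocks less transparently, so I prefer the reduction above.
\qed
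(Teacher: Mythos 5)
Your proof is correct and follows essentially the same route as the paper, which states this proposition without a separate proof beyond the remark that it follows from the Fibonacci decomposition by ``considering the constraint on the extremities of a Lucas string''; your reduction to Proposition~\ref{pro:dec1} together with the translation of $b_1b_n\neq 1$ into $k_0+k_q\leq 1$ (including the degenerate check at $n=1$, where the string $1$ has $q=0$ and is correctly excluded) is exactly the intended argument, just written out in full.
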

 
From Propositions \ref{pro:btprec} and \ref{pro:dec0} it is possible to characterize the bottom and top vertices of maximal hypercubes in $\Gamma_n$.
\begin{lemma}
 If $H$ is a maximal hypercube of dimension $p$ in $\Gamma_n$ then $b(H)=0^n$ and $t(H)=0^{l_0}10^{l_1}\dots 10^{l_i}\dots10^{l_{p}}$ where $\sum_{i=0}^p{l_i}=n-p$; $0\leq l_0\leq 1$; $0\leq l_p\leq 1$  and $1\leq l_i\leq 2$ for $i=1,\dots,p-1$.
 Furthermore any such vertex is the top vertex of a unique maximal hypercube.
 \end{lemma}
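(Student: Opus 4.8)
The plan is to translate ``maximal'' into a purely combinatorial condition on $Sup(H)$ and then read off the block structure of $t(H)$. By Proposition~\ref{pro:btprec} an induced hypercube $H$ of $Q_n$ is determined by $b(H)$ together with $Sup(H)$, since $V(H)=\{x:\ x_j=b(H)_j\text{ for all }j\notin Sup(H)\}$. If $H$ is properly contained in an induced hypercube $H'$ of dimension $p+1$, then $Sup(H')=Sup(H)\cup\{i\}$ for some $i\notin Sup(H)$, and comparing the two descriptions of the vertex sets forces $H'=H\widetilde{+}\epsilon_i$ (the inclusion $V(H)\subseteq V(H')$ makes $b(H)$ and $b(H')$ agree off $Sup(H')$). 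Hence, inside $\Gamma_n$, the hypercube $H$ is maximal exactly when $H\widetilde{+}\epsilon_i\not\subseteq\Gamma_n$ for every $i\notin Sup(H)$, and this is the criterion I would exploit throughout.

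First I would show $b(H)=0^n$. If $b(H)_i=1$ for some $i$, then $i\notin Sup(H)$ by Proposition~\ref{pro:btprec}(iii), so every string of $H$ has a $1$ in position $i$; the strings of $H\widetilde{+}\epsilon_i$ are obtained by leaving those alone or switching that $1$ to a $0$, and deleting a $1$ from a Fibonacci string keeps it a Fibonacci string, so $H\widetilde{+}\epsilon_i\subseteq\Gamma_n$, contradicting maximality. Thus $b(H)=0^n$, whence $V(H)=\{x:\ x_j=0\text{ for }j\notin Sup(H)\}$ and $t(H)$ is the characteristic string of $Sup(H)$, of weight $p=|Sup(H)|$.

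Next, with $b(H)=0^n$, extending by $\epsilon_i$ ($i\notin Sup(H)$) simply inserts a $1$ in position $i$ of every string of $H$. Because a string of $H$ carries a $1$ in position $i-1$ (respectively $i+1$) precisely when $i-1\in Sup(H)$ (respectively $i+1\in Sup(H)$), we get $H\widetilde{+}\epsilon_i\subseteq\Gamma_n$ if and only if $i-1\notin Sup(H)$ and $i+1\notin Sup(H)$, with out-of-range indices ignored. Therefore $H$ is maximal in $\Gamma_n$ iff $Sup(H)$ has no two consecutive elements (equivalently $t(H)$ is a Fibonacci string, already forced by $H\subseteq\Gamma_n$) and every $i\in\{1,\dots,n\}\setminus Sup(H)$ has a neighbour in $Sup(H)$. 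Writing $t(H)=0^{l_0}10^{l_1}\cdots10^{l_p}$ as in Proposition~\ref{pro:dec0}, this second condition is exactly that the leading block $l_0$ and the trailing block $l_p$ have length at most $1$ while each internal block $l_i$ ($1\le i\le p-1$) has length at most $2$: a boundary block of length $\ge 2$, or an internal block of length $\ge 3$, contains a $0$-position with no neighbour in $Sup(H)$, and conversely all shorter blocks are fine. Together with $l_1,\dots,l_{p-1}\ge 1$ and $\sum l_i=n-p$ coming from the decomposition (using $w(t(H))=p$), this is precisely the stated list of constraints.

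Finally, for the uniqueness clause I would go backwards: given a string $t=0^{l_0}10^{l_1}\cdots10^{l_p}$ obeying the constraints, $t$ is a Fibonacci string of weight $p$, so by Proposition~\ref{pro:bt} there is a unique induced hypercube $H$ of $Q_n$ with $b(H)=0^n$ and $t(H)=t$, namely the one with $Sup(H)=\{i:\ t_i=1\}$; the two conditions from the previous step hold by the constraints, so $H$ lies in $\Gamma_n$ and is maximal of dimension $p$, and any maximal hypercube $H'$ of $\Gamma_n$ with $t(H')=t$ has $b(H')=0^n$ by the second step, hence equals $H$. The only genuinely delicate point is the bookkeeping in the third step, namely matching ``every gap vertex is adjacent to the support'' with the right bounds on the boundary blocks $l_0,l_p$ versus the internal blocks $l_1,\dots,l_{p-1}$; the remaining steps are routine.
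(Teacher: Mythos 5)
Your proof is correct and follows essentially the same route as the paper: force $b(H)=0^n$ by flipping a constant $1$-coordinate, then characterize maximality by the existence of a coordinate $i\notin Sup(H)$ whose insertion preserves the Fibonacci condition, which translates into exactly the stated bounds on the blocks $l_0,\dots,l_p$. Your explicit justification that any $Q_{p+1}$ containing $H$ must be $H\widetilde{+}\epsilon_i$, and your phrasing of maximality as ``$Sup(H)$ dominates the non-support positions,'' are slightly more careful packagings of steps the paper leaves implicit, but the substance is identical.
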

 \begin{proof}
 Let $H$ be a maximal hypercube in $\Gamma_n$.
 Assume there exists an integer  $i$ such that  $b(H)_i = 1$. Then $i \notin sup(H)$ by Proposition \ref{pro:btprec}. Therefore, for any  $x \in V(H)$, $x_i= b(H)_i= 1$ thus $x+\epsilon_i \in V(\Gamma_n)$.
 Then $H\widetilde{+}\epsilon_i$ must be an induced subgraph of $\Gamma_n$, a contradiction with $H$  maximal.
    
 Consider now $t(H)=0^{l_0}10^{l_1}\dots 10^{l_i}\dots10^{l_{p}}$.
 
If $l_0\geq2$ then for any vertex $x$ of $H$ we have $x_0=x_1=0$, thus $x+\epsilon_0$ $\in V(\Gamma_n)$.  Therefore  $H\widetilde{+}\epsilon_i$ is an induced subgraph of $\Gamma_n$, a contradiction with $H$  maximal. The case  $l_p\geq2$ is  similar by symmetry.

Assume now $l_i\geq3$, for some $i\in\{1,\dots,p-1\}$. Let $j=i+\sum_{k=0}^{i-1}{l_k}$. We have thus $t(H)_j=1$ and $t(H)_{j+1}=t(H)_{j+2}=t(H)_{j+3}$=0. Then for any vertex $x$ of $H$ we have $x_{j+1}=x_{j+2}=x_{j+3}=0$, thus $x+\epsilon_{j+2}$ $\in V(\Gamma_n)$ and $H$ is not maximal, a contradiction.

Conversely consider a vertex $z=0^{l_0}10^{l_1}\dots 10^{l_i}\dots10^{l_{p}}$ where $\sum_{i=0}^p{l_i}=n-p$; $0\leq l_0\leq 1$; $0\leq l_p\leq 1$  and $1\leq l_i\leq 2$ for $i=1,\dots,p-1$. Then, by Propositions \ref{pro:bt} and \ref{pro:btprec}, $t(H)=z$ and $b(H)=0^n$ define a unique hypercube $H$ in $Q_n$ isomorphic to $Q_p$  and clearly all vertices of $H$ are Fibonacci strings. Notice that for any $i\notin Sup(H)$  $z+\epsilon_i$ is not a Fibonacci string thus $H$ is maximal.
\qed
 \end{proof}
 
With the same arguments we obtain for Lucas cube:
 \begin{proposition}\label{pro:lucas}
 If $H$ is a maximal hypercube of dimension $p\geq1$ in $\Lambda_n$ then $b(H)=0^n$ and $t(H)=0^{l_0}10^{l_1}\dots 10^{l_i}\dots10^{l_{p}}$ where $\sum_{i=0}^p{l_i}=n-p$; $0\leq l_0\leq 2$; $0\leq l_p\leq 2$; $1\leq l_0+l_p\leq 2$ and $1\leq l_i\leq 2$ for $i=1,\dots,p-1$.
 Furthermore any such vertex is the top vertex of a maximal hypercube.
 \end{proposition}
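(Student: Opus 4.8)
\proof
The plan is to mirror, step by step, the proof of the preceding Lemma for $\Gamma_n$, inserting at each coordinate flip a check of the extra constraint $b_1b_n\neq1$ that distinguishes Lucas strings among Fibonacci strings; as there, maximality of $H$ is contradicted by exhibiting a coordinate $i\notin Sup(H)$ for which every $x+\epsilon_i$ with $x\in V(H)$ is still a Lucas string, since then $H\widetilde{+}\epsilon_i\subseteq\Lambda_n$ is a copy of $Q_{p+1}$ strictly containing $H$. I would first prove $b(H)=0^n$: if $b(H)_i=1$ for some $i$ then $i\notin Sup(H)$ by Proposition \ref{pro:btprec}, so $x_i=1$ for all $x\in V(H)$, and switching this coordinate to $0$ destroys no Fibonacci condition and cannot produce the pattern $b_1b_n=1$, a contradiction. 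Hence $b(H)=0^n$, so $w(t(H))=d(b(H),t(H))=p$ and $Sup(H)$ is exactly the set of positions of the $1$'s of $t(H)$; writing $t(H)=0^{l_0}10^{l_1}\cdots10^{l_p}$ via Proposition \ref{pro:declucas0} then gives $\sum_{i=0}^p l_i=n-p$, $l_0,l_p\geq0$, $l_0+l_p\geq1$ and $l_i\geq1$ for $1\leq i\leq p-1$, so only the upper bounds remain.

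For the upper bounds: if $l_i\geq3$ for some interior $i$, flipping the middle coordinate of the block $0^{l_i}$ creates no $11$ (its neighbours stay $0$) and leaves the extremities untouched, a contradiction, so $l_i\leq2$; if $l_0\geq3$, flipping the second coordinate keeps coordinates $1$ and $3$ equal to $0$, so neither a $11$ nor the pattern $b_1b_n=1$ arises, whence $l_0\leq2$, and symmetrically $l_p\leq2$; finally, if $l_0+l_p\geq3$ then by $l_0,l_p\leq2$ one of them, say $l_0$, equals $2$ and the other, $l_p$, is at least $1$, so $t(H)_1=t(H)_2=0$ and $t(H)_n=0$, and flipping coordinate $1$ keeps coordinate $2$ equal to $0$ (no $11$) and coordinate $n$ equal to $0$ (so $b_1b_n\neq1$), yielding again a Lucas string, a contradiction; hence $l_0+l_p\leq2$.

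For the converse, given $z=0^{l_0}10^{l_1}\cdots10^{l_p}$ obeying the four conditions, set $b(H)=0^n$ and $t(H)=z$; by Propositions \ref{pro:bt} and \ref{pro:btprec} this determines a well-defined induced $Q_p$ in $Q_n$ whose vertices are the strings obtained from $z$ by turning some of its $1$'s into $0$'s. Every such string has no $11$, and it begins and ends with $1$ only if $z$ does, i.e. only if $l_0=l_p=0$, which $l_0+l_p\geq1$ excludes; hence $H\subseteq\Lambda_n$. Maximality reduces to checking that $z+\epsilon_j$ is not a Lucas string whenever $z_j=0$: if $j$ lies in an interior block $0^{l_i}$ (of size $1$ or $2$, flanked by $1$'s) the flip creates a $11$; if $j$ lies in $0^{l_0}$, then either $l_0=1$ and the flip creates a $11$, or $l_0=2$, in which case $l_0+l_p\leq2$ forces $l_p=0$, hence $z_n=1$, so flipping coordinate $1$ produces $b_1b_n=1$ while flipping coordinate $2$ produces a $11$; the block $0^{l_p}$ is symmetric. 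Thus no extension exists and $H$ is maximal; and since $b(H)=0^n$ is forced, $H$ is in fact the unique maximal hypercube with top vertex $z$, just as for $\Gamma_n$.

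The argument has no genuine obstacle; the only delicate point is the bookkeeping at the two extremal blocks, where a coordinate flip may fail to create a $11$ and one must instead invoke the Lucas-specific condition $b_1b_n\neq1$ --- which is exactly the mechanism forcing the coupled bound $l_0+l_p\leq2$.
\qed
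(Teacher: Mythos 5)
Your proof is correct and follows the same route the paper intends: the paper gives no separate argument for the Lucas case, saying only that it follows ``with the same arguments'' as the Fibonacci lemma, and you have supplied exactly those arguments together with the Lucas-specific checks (no flip at an interior or extremal position may create a $11$ or the pattern $b_1b_n=1$), which is precisely what forces the extra coupled bound $l_0+l_p\leq 2$. The case analysis in both directions, including the point that when $l_0=2$ one must have $l_p=0$ so that flipping coordinate $1$ violates the Lucas condition rather than creating a $11$, is complete and accurate.
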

 
\begin{theorem}\label{th:fp}
 Let $0\leq p \leq n$ and $f_{n,p}$ be the number of maximal hypercubes of dimension $p$ in $\Gamma_n$ then:\\

\[f_{n,p}=\binom{p+1}{n-2p+1}\]\\

\end{theorem}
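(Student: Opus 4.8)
The plan is simply to count top vertices, using the Lemma that immediately precedes the statement. For $p\ge 1$ that Lemma gives a bijection between the maximal $p$-dimensional hypercubes of $\Gamma_n$ and the strings $0^{l_0}10^{l_1}\cdots 10^{l_p}$ with $\sum_{i=0}^{p} l_i = n-p$, where $l_0,l_p\in\{0,1\}$ and $l_i\in\{1,2\}$ for $1\le i\le p-1$. Hence $f_{n,p}$ equals the number of integer tuples $(l_0,l_1,\dots,l_p)$ meeting these constraints, and the whole task is to evaluate that count.

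To do so I would normalise the interior blocks: for $1\le i\le p-1$ write $l_i = 1+m_i$ with $m_i\in\{0,1\}$. The constraint $\sum_{i=0}^p l_i = n-p$ then becomes $l_0 + l_p + \sum_{i=1}^{p-1} m_i = n-p-(p-1) = n-2p+1$, while $l_0,\ l_p,\ m_1,\dots,m_{p-1}$ now range independently over $\{0,1\}$. There are precisely $p+1$ of these $0/1$ variables, so the number of admissible tuples is the number of ways to choose which $n-2p+1$ of them equal $1$, that is $\binom{p+1}{n-2p+1}$, with the usual convention that the binomial coefficient is $0$ when $n-2p+1<0$ or $n-2p+1>p+1$. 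This already gives the formula for all $p\ge 1$.

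It remains to dispatch $p=0$ directly: a single vertex is a maximal $0$-cube exactly when it has no neighbour in $\Gamma_n$, which forces $n=0$; thus $f_{0,0}=1$ and $f_{n,0}=0$ for $n\ge1$, matching $\binom{1}{n+1}$. The proof is thus a change of variables followed by a one-line binomial count, so I do not expect a genuine obstacle. The only points that need care are the boundary blocks $l_0,l_p$ — it is precisely because they contribute from $\{0,1\}$ while the $p-1$ interior blocks contribute from $\{1,2\}$ that the substitution leaves $2+(p-1)=p+1$ free binary variables and lands on $\binom{p+1}{n-2p+1}$ — and the degenerate ranges when $p=0$ or $p=1$ (no interior blocks), which should be checked against the small cases $C'(\Gamma_1,x)=x$, $C'(\Gamma_2,x)=2x$, $C'(\Gamma_3,x)=x^2+x$ listed above.
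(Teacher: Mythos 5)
Your proposal is correct and follows essentially the same route as the paper: both reduce to counting the top vertices described in the preceding Lemma and perform the same shift $l_i\mapsto l_i-1$ on the interior blocks to get $p+1$ binary variables summing to $n-2p+1$. The only cosmetic difference is that you count these $0/1$ tuples directly as $\binom{p+1}{n-2p+1}$, whereas the paper routes the count through a bijection with Fibonacci strings of length $n-p+1$ and weight $n-2p+1$ and then invokes Proposition~\ref{pro:nbw}.
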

\begin{proof}
This is clearly true for $p=0$ so assume $p\geq1$. Since maximal hypercubes of $\Gamma_n$ are characterized by their top vertex, let us consider the set $T$ of strings which can be write $0^{l_0}10^{l_1}\dots 10^{l_i}\dots10^{l_{p}}$ where $\sum_{i=0}^p{l_i}=n-p$; $0\leq l_0\leq 1$; $0\leq l_p\leq 1$  and $1\leq l_i\leq 2$ for $i=1,\dots,p-1$. Let $l'_i =l_i-1$ for $i=1,\dots,p-1$; $l'_0 =l_0$; $l'_p =l_p$. We have thus a 1 to 1 mapping between $T$ and the set of strings $D=\{0^{l'_0}10^{l'_1}\dots 10^{l'_i}\dots10^{l'_{p}}\}$ where $\sum_{i=0}^p{l'_i}=n-2p+1$ any $l'_i\leq 1$ for $i=0,\dots,p$. This set is in bijection with  the set $E=\{1^{l'_0}01^{l'_1}\dots 01^{l'_i}\dots01^{l'_{p}}\}$. By Proposition \ref{pro:dec1}, $E$ is the set of Fibonacci strings of length $n-p+1$ and weight $n-2p+1$ and we obtain the expression of $f_{n,p}$ by Proposition \ref{pro:nbw}.
\qed
\end{proof}
\begin{corollary}
 The counting polynomial $C'(\Gamma_{n},x)=\sum_{p=0}^{\infty}{f_{n,p}x^p}$ of the number of maximal hypercubes of dimension $p$ in $\Gamma_n$ satisfies:
\begin{eqnarray*}
C'(\Gamma_{n},x)& = &x(C'(\Gamma_{n-2},x)+C'(\Gamma_{n-3},x))\ \ \ (n\geq3)\\
C'(\Gamma_{0},x)& = &1,\ C'(\Gamma_{1},x)=x,\ C'(\Gamma_{2},x)=2x\\
\end{eqnarray*}
The generating function of the sequence $\{C'(\Gamma_{n},x)\}$ is:
$$\sum_{n\geq0}{C'(\Gamma_{n},x)y^n}=\frac{1+xy(1+y)}{1-xy^2(1+y)} $$
\end{corollary}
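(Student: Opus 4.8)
The plan is to derive the recurrence and the initial data first, and then read off the generating function by a routine manipulation. Coefficientwise, the asserted recurrence $C'(\Gamma_{n},x)=x\bigl(C'(\Gamma_{n-2},x)+C'(\Gamma_{n-3},x)\bigr)$ is exactly the identity $f_{n,p}=f_{n-2,p-1}+f_{n-3,p-1}$ for $n\geq 3$ and $p\geq 1$ (the factor $x$ accounting for the shift $p\mapsto p-1$); the constant terms need no attention because the right-hand side is divisible by $x$, and $f_{n,0}=\binom{1}{n+1}=0$ for every $n\geq 1$. The three initial polynomials $C'(\Gamma_{0},x)=1$, $C'(\Gamma_{1},x)=x$, $C'(\Gamma_{2},x)=2x$ are read off directly (and also follow from $f_{n,p}=\binom{p+1}{n-2p+1}$), and it is worth noting that the recurrence genuinely involves $\Gamma_0$, which is why it starts only at $n=3$.

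For the recurrence, substitute the closed form of Theorem~\ref{th:fp}: one gets $f_{n-2,p-1}=\binom{p}{\,n-2p+1\,}$ and $f_{n-3,p-1}=\binom{p}{\,n-2p\,}$, hence $f_{n-2,p-1}+f_{n-3,p-1}=\binom{p}{\,n-2p+1\,}+\binom{p}{\,n-2p\,}=\binom{p+1}{\,n-2p+1\,}=f_{n,p}$ by Pascal's rule. With the standard convention $\binom{a}{b}=0$ for $b<0$ or $b>a$, Pascal's identity holds for every integer value of $n-2p+1$, so no separate treatment of the boundary values of $p$ or of small $n$ is required. (If one prefers a structural argument: by the characterization of top vertices, a maximal $p$-cube $H\subseteq\Gamma_n$ is determined by $t(H)=0^{l_0}10^{l_1}\cdots 10^{l_p}$; splitting according to $l_p=0$ or $l_p=1$ and deleting the trailing block $01$ in the first case, the trailing $0$ in the second, sets up — after one further unfolding of the $l_p=1$ case — a bijection with the maximal $(p-1)$-cubes of $\Gamma_{n-2}$ and of $\Gamma_{n-3}$ respectively. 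This is just the recursive decomposition of Fibonacci strings by their last symbol, transported through the bijections used to prove Theorem~\ref{th:fp}.)

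For the generating function, set $F(x,y)=\sum_{n\geq 0}C'(\Gamma_{n},x)y^{n}$, multiply the recurrence by $y^{n}$ and sum over $n\geq 3$. The left side becomes $F-C'(\Gamma_0,x)-C'(\Gamma_1,x)y-C'(\Gamma_2,x)y^{2}=F-1-xy-2xy^{2}$, while the right side is $xy^{2}\sum_{m\geq1}C'(\Gamma_m,x)y^{m}+xy^{3}\sum_{m\geq0}C'(\Gamma_m,x)y^{m}=xy^{2}(F-1)+xy^{3}F$. Equating and solving gives $F\bigl(1-xy^{2}-xy^{3}\bigr)=1+xy+xy^{2}$, that is $F=\dfrac{1+xy(1+y)}{1-xy^{2}(1+y)}$, as claimed. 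There is no real obstacle here: once Theorem~\ref{th:fp} is available the whole thing is bookkeeping. The only points deserving a little care are the index shift when passing from a recurrence valid for $n\geq 3$ (and referencing $\Gamma_0$) to the generating function, and, in the structural variant, the degenerate case $p=1$, where the target ``maximal $0$-cube of $\Gamma_{n-2}$'' exists only for $n=2$.
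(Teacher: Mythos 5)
Your proof is correct and follows essentially the same route as the paper: derive $f_{n,p}=f_{n-2,p-1}+f_{n-3,p-1}$ from the closed form of Theorem~2.9 via Pascal's identity, note $f_{n,0}=0$ for $n\geq 1$, and then obtain the generating function from the functional equation $F-1-xy-2xy^{2}=xy^{2}(F-1)+xy^{3}F$. The parenthetical bijective variant is a nice extra but not needed; the main argument matches the paper's.
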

\begin{proof}
By theorem \ref{th:fp} and Pascal identity we obtain $f_{n,p}=f_{n-2,p-1}+ f_{n-3,p-1}$ for $n\geq3$ and $p\geq1$. Notice that $f_{n,0}=0$ for $n\neq 0$. The recurrence relation for $C'(\Gamma_{n},x)$ follows.
Setting $f(x,y)=\sum_{n\geq0}{C'(\Gamma_{n},x)y^n}$ we deduce from the recurrence relation
$f(x,y)-1-xy-2xy^2=x(y^2(f(x,y)-1)+y^3f(x,y))$ thus the value of $f(x,y)$.
\qed
\end{proof}

\begin{theorem}\label{th:gp}
 Let $1\leq p \leq n$ and $g_{n,p}$ be the number of maximal hypercubes of dimension $p$ in $\Lambda_n$  then:\\
 
\[g_{n,p}=\frac{n}{p}\binom{p}{n-2p}.\]\\

\end{theorem}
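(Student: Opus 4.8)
The plan is to mimic the proof of Theorem \ref{th:fp}, using the characterization of top vertices of maximal hypercubes in $\Lambda_n$ given by Proposition \ref{pro:lucas}, but paying attention to the cyclic nature of the Lucas constraint. Since maximal hypercubes of $\Lambda_n$ are determined by their top vertex, I want to count the strings $t=0^{l_0}10^{l_1}\dots10^{l_p}$ with $\sum_{i=0}^p l_i = n-p$, $1\le l_i\le 2$ for $i=1,\dots,p-1$, $0\le l_0,l_p\le 2$ and $1\le l_0+l_p\le 2$. As in the Fibonacci case I would substitute $l'_i = l_i-1$ for the ``interior'' indices $i=1,\dots,p-1$, turning the constraint $1\le l_i\le 2$ into $0\le l'_i\le 1$. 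The remaining sum becomes $l_0 + l_p + \sum_{i=1}^{p-1} l'_i = n-p-(p-1) = n-2p+1$, with $l'_i\le 1$ for interior $i$, and $l_0,l_p$ governed by the boundary condition $1\le l_0+l_p\le 2$, $0\le l_0,l_p\le 2$.

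The key step is then a combinatorial reorganization that exposes the factor $\frac{n}{p}$, which is characteristic of counting cyclic (necklace-type) arrangements. I would split the count of boundary pairs $(l_0,l_p)$ according to the value $s=l_0+l_p\in\{1,2\}$: for $s=1$ there are two ordered pairs $(1,0),(0,1)$, and for $s=2$ there are three ordered pairs $(2,0),(1,1),(0,2)$. In each case the number of interior strings $1^{l'_1}01^{l'_2}\dots$ of the appropriate total weight is read off from Proposition \ref{pro:dec1}, giving binomial coefficients. So $g_{n,p}$ becomes a small linear combination of terms $\binom{p-1}{n-2p+1-s}$ for $s=1,2$, i.e. of $\binom{p-1}{n-2p}$ and $\binom{p-1}{n-2p-1}$, with coefficients $2$ and $3$ respectively. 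I would then show that $2\binom{p-1}{n-2p} + 3\binom{p-1}{n-2p-1}$ equals $\frac{n}{p}\binom{p}{n-2p}$ by a direct manipulation of factorials (using $\binom{p-1}{n-2p}+\binom{p-1}{n-2p-1}=\binom{p}{n-2p}$ and $\binom{p-1}{n-2p-1}=\frac{n-2p}{p}\binom{p}{n-2p}$, so the left side is $\binom{p}{n-2p} + \binom{p-1}{n-2p} + 2\binom{p-1}{n-2p-1} = \binom{p}{n-2p}+\frac{n-2p}{p}\binom{p}{n-2p}+\dots$, collapsing to $\frac{n}{p}\binom{p}{n-2p}$ after collecting terms).

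An alternative, perhaps cleaner, route is to decompose according to the value of $l_0$ alone (three cases $l_0=0,1,2$), or to count directly the cyclic sequences: a top vertex corresponds to placing $p$ ones cyclically around $n$ positions so that consecutive ones are separated by a gap of $2$ or $3$, with a marked starting position; the $\frac{n}{p}$ is exactly the ratio of the $n$ choices of marked position to the $p$-fold symmetry, and the binomial $\binom{p}{n-2p}$ counts which of the $p$ gaps are enlarged from $2$ to $3$ (there must be $n-2p$ of them since the gaps sum to $n$). I would present whichever of these is shortest; the necklace-counting argument gives the formula with essentially no computation once the dictionary between top vertices and gap sequences is set up carefully.

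The main obstacle I anticipate is handling the boundary bookkeeping correctly: the conditions $0\le l_0,l_p\le 2$ together with $1\le l_0+l_p\le 2$ do not decouple, so one cannot simply treat $l_0$ and $l_p$ as two independent $\{0,1\}$-choices the way the interior blocks are treated, and there is a risk of double-counting when $l_0+l_p$ can be achieved in several ways. Care is also needed with small/degenerate values of $p$ (e.g. $p=1$, where there are no interior blocks and the ``string'' is just $0^{l_0}10^{l_1}$ wrapping around) and with the edge cases where $\binom{p}{n-2p}$ vanishes; verifying the formula against the explicitly tabulated polynomials $C'(\Lambda_n,x)$ for $n\le 6$ provides a useful sanity check. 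Once the boundary cases are organized, the remaining identity is a routine Pascal-type manipulation.
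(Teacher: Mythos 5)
Your proposal is correct and follows essentially the same route as the paper: both count the top vertices characterized by Proposition~\ref{pro:lucas} via the substitution $l'_i=l_i-1$ on the interior blocks, a finite case split on the boundary blocks, and a closing Pascal-type identity. The paper happens to split on $l_0\in\{0,1,2\}$ (obtaining $2\binom{p}{n-2p}+\binom{p-1}{n-2p-1}$) rather than on $l_0+l_p\in\{1,2\}$ (your $2\binom{p-1}{n-2p}+3\binom{p-1}{n-2p-1}$), but these are equivalent bookkeepings of the same count and both collapse to $\frac{n}{p}\binom{p}{n-2p}$.
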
\begin{proof}
The proof is similar to the previous result with three cases according to the value of $l_{0}$:
 
By Proposition \ref{pro:lucas} the set $T$ of top vertices that begin with $1$ is the set of strings which can be write $10^{l_1}\dots 10^{l_i}\dots10^{l_{p}}$ where $\sum_{i=1}^p{l_i}=n-p$ and $1\leq l_i\leq 2$ for $i=1,\dots,p$.
Let $l'_i =l_i-1$ for $i=1,\dots,p$. We have thus a 1 to 1 mapping between $T$ and the set of strings $D=\{10^{l'_1}\dots 10^{l'_i}\dots10^{l'_{p}}$\} where $\sum_{i=1}^p{l'_i}=n-2p$, $0\leq l'_i\leq 1$ for $i=1,\dots,p$. Removing the first $1$, and by complement, this set is in bijection with  the set $E=\{1^{l'_1}\dots 01^{l'_i}\dots01^{l'_{p}}\}$. By Proposition \ref{pro:dec1}, $E$ is the set of Fibonacci strings of length $n-p-1$ and weight $n-2p$. Thus $|T|=\binom{p}{n-2p}$.

The set $U$ of top vertices that begin with $01$ is the set of strings which can be write $010^{l_1}\dots 10^{l_i}\dots10^{l_{p}}$ where $\sum_{i=1}^p{l_i}=n-p-1$; $1\leq l_i\leq 2$ for $i=1,\dots,p-1$ and $l_p\leq 1$.
Let $l'_i =l_i-1$ for $i=1,\dots,p-1$ and $l'_p =l_p$ . We have thus a 1 to 1 mapping between $U$ and the set of strings $F=\{010^{l'_1}\dots 10^{l'_i}\dots10^{l'_{p}}$\} where $\sum_{i=1}^p{l'_i}=n-2p$ and $l'_i\leq 1$ for $i=1,\dots,p$. Removing the first $01$, and by complement, this set is in bijection with  the set $G=\{1^{l'_1}\dots 01^{l'_i}\dots01^{l'_{p}}\}$. By Proposition \ref{pro:dec1}, $G$ is  the set of Fibonacci strings of length $n-p-1$ and weight $n-2p$. Thus $|U|=\binom{p}{n-2p}$.

The last set, $V$, of top vertices that begin with $001$, is the set of strings which can be write $0010^{l_1}\dots 10^{l_i}\dots0^{l_{p-1}}1$ where $\sum_{i=1}^{p-1}{l_i}=n-p-2$ and $1\leq l_i\leq 2$ for $i=1,\dots,p-1$.
Let $l'_i =l_i-1$ for $i=1,\dots,p-1$. We have thus a 1 to 1 mapping between $V$ and the set of strings $H=\{0010^{l'_1}\dots 10^{l'_i}\dots0^{l'_{p-1}}1\}$ where $\sum_{i=1}^{p-1}{l'_i}=n-2p-1$ and $l'_i\leq 1$ for $i=1,\dots,p-1$. Removing the first $001$ and the last $1$, this set, again by complement, is in bijection with the set $K=\{1^{l'_1}\dots 01^{l'_i}\dots01^{l'_{p-1}}\}$. The set $K$ is the set of Fibonacci strings of length $n-p-3$ and weight $n-2p-1$. Thus $|V|=\binom{p-1}{n-2p-1}$ and $g_{n,p}=2\binom{p}{n-2p}+\binom{p-1}{n-2p-1}=\frac{n}{p}\binom{p}{n-2p}$.
\qed
\end{proof}
\begin{corollary}
 The counting polynomial $C'(\Lambda_{n},x)=\sum_{p=0}^{\infty}{g_{n,p}x^p}$ of the number of maximal hypercubes of dimension $p$ in $\Lambda_n$ satisfies:
\begin{eqnarray*}
C'(\Lambda_{n},x)& = &x(C'(\Lambda_{n-2},x)+C'(\Lambda_{n-3},x))\ \ \ (n\geq5)\\
C'(\Lambda_{0},x)& = &1,\ C'(\Lambda_{1},x)=1,\ C'(\Lambda_{2},x)=2x,\ C'(\Lambda_{3},x)=3x,\ C'(\Lambda_{4},x)=2x^2\\
\end{eqnarray*}
The generating function of the sequence $\{C'(\Lambda_{n},x)\}$ is:
$$\sum_{n\geq0}{C'(\Lambda_{n},x)y^n}=\frac{1+y+xy^2+xy^3-xy^4}{1-xy^2(1+y)} $$
\end{corollary}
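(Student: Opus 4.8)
The plan is to derive the recurrence for $C'(\Lambda_n,x)$ directly from the closed form of $g_{n,p}$ established in Theorem~\ref{th:gp}, in the same spirit as the corollary following Theorem~\ref{th:fp}. First I would record that $g_{n,p}=\frac{n}{p}\binom{p}{n-2p}=2\binom{p}{n-2p}+\binom{p-1}{n-2p-1}$, which expresses $g_{n,p}$ as a sum of two ordinary binomial coefficients. Applying Pascal's identity to each term gives $\binom{p}{n-2p}=\binom{p-1}{n-2p}+\binom{p-1}{n-2p-1}$ and $\binom{p-1}{n-2p-1}=\binom{p-2}{n-2p-1}+\binom{p-2}{n-2p-2}$, and after regrouping one checks that $g_{n,p}=g_{n-2,p-1}+g_{n-3,p-1}$ holds for all $n$ large enough that no boundary effects interfere; the arithmetic is the substitution $n\mapsto n-2$, $p\mapsto p-1$ (so $n-2p\mapsto n-2p$) and $n\mapsto n-3$, $p\mapsto p-1$ (so $n-2p\mapsto n-2p-1$) into the two-term formula, then Pascal. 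Translating $g_{n,p}=g_{n-2,p-1}+g_{n-3,p-1}$ into the counting polynomials yields $C'(\Lambda_n,x)=x\bigl(C'(\Lambda_{n-2},x)+C'(\Lambda_{n-3},x)\bigr)$, valid for $n\geq 5$.

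The delicate point, and the reason the recurrence is only claimed for $n\geq 5$ rather than $n\geq 3$, is the behaviour at small $n$: the identity $g_{n,p}=2\binom{p}{n-2p}+\binom{p-1}{n-2p-1}$ was proved in Theorem~\ref{th:gp} under the hypothesis $1\le p\le n$, and the three-case decomposition of top vertices (strings beginning $1$, $01$, or $001$) degenerates when $p$ or $n$ is small — e.g. the set $V$ of top vertices beginning with $001$ requires $p\ge 2$ and enough room for the trailing $1$. So I would simply verify the base cases $C'(\Lambda_0,x)=1$, $C'(\Lambda_1,x)=1$, $C'(\Lambda_2,x)=2x$, $C'(\Lambda_3,x)=3x$, $C'(\Lambda_4,x)=2x^2$ by the direct inspection already tabulated in the introduction, and then check that the recurrence $C'(\Lambda_n,x)=x(C'(\Lambda_{n-2},x)+C'(\Lambda_{n-3},x))$ correctly reproduces $C'(\Lambda_5,x)=5x^2$ and $C'(\Lambda_6,x)=2x^3+3x^2$ from those initial values, confirming the range of validity. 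One should note in passing that $n=3,4$ genuinely fail the recurrence: $x(C'(\Lambda_1,x)+C'(\Lambda_0,x))=x(1+1)=2x\ne 3x$, which is why the starting index is $5$.

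For the generating function, set $F(x,y)=\sum_{n\ge 0}C'(\Lambda_n,x)y^n$. Multiplying the recurrence by $y^n$ and summing over $n\ge 5$ gives
\[
F(x,y)-1-y-2xy^2-3xy^3-2x^2y^4 = xy^2\bigl(F(x,y)-1-y-2xy^2\bigr)+xy^3\bigl(F(x,y)-1-y\bigr),
\]
where on the right I have subtracted off exactly the low-order terms of $C'(\Lambda_{n-2},x)$ and $C'(\Lambda_{n-3},x)$ that correspond to $n<5$. Solving this linear equation for $F(x,y)$, the coefficient of $F$ on the left after moving terms over is $1-xy^2-xy^3=1-xy^2(1+y)$, and collecting the explicit polynomial terms gives the numerator; a short computation (expanding $2x^2y^4 - xy^2(1+y+2xy^2) - xy^3(1+y)$ and adding $1+y+2xy^2+3xy^3$) should collapse the $x^2$ contributions and leave $1+y+xy^2+xy^3-xy^4$, yielding
\[
\sum_{n\ge 0}C'(\Lambda_n,x)y^n=\frac{1+y+xy^2+xy^3-xy^4}{1-xy^2(1+y)}.
\]
The main obstacle is purely bookkeeping: getting the boundary corrections in the generating-function manipulation exactly right so that the spurious $x^2y^4$ term cancels, and making sure the claimed recurrence is not asserted below $n=5$. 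I expect no conceptual difficulty beyond the Pascal-identity manipulation in the first paragraph.
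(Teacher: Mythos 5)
Your proposal is correct and matches the paper's own argument: both derive $g_{n,p}=g_{n-2,p-1}+g_{n-3,p-1}$ for $n\geq 5$ from the two-binomial form $2\binom{p}{n-2p}+\binom{p-1}{n-2p-1}$ via Pascal's identity (with the small-$p$ and small-$n$ edge cases handled separately), and then solve the same boundary-corrected functional equation $F-1-y-2xy^2-3xy^3-2x^2y^4=xy^2(F-1-y-2xy^2)+xy^3(F-1-y)$ for the generating function. Your explicit cancellation of the $2x^2y^4$ terms and the check that $n=3,4$ fail the recurrence are consistent with, and slightly more detailed than, the paper's proof.
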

\begin{proof}
Assume $n\geq 5$. Here also by theorem \ref{th:gp} and Pascal identity we get $g_{n,p}=g_{n-2,p-1}+ g_{n-3,p-1}$ for $n\geq5$ and $p\geq2$. Notice that when $n\geq5$ this equality occurs also for $p=1$ and $g_{n,0}=0$. The recurrence relation for $C'(\Lambda_{n},x)$ follows and  $g(x,y)=\sum_{n\geq0}{C'(\Lambda_{n},x)y^n}$ satisfies
$g(x,y)-1-y-2xy^2-3xy^3-2x^2y4=x(y^2(g(x,y)-1-y-2xy^2)+y^3(g(x,y)-1-y))$.
\qed
\end{proof}

Notice that $f_{n,p}\neq0$  if and only if $\left\lceil \frac{n}{3} \right\rceil \leq p  \leq \left\lfloor \frac{n+1}{2} \right\rfloor$ and $g_{n,p}\neq0$  if and only if $\left\lceil \frac{n}{3} \right\rceil \leq p  \leq \left\lfloor \frac{n}{2} \right\rfloor$ (for $n\neq1$). Maximal induced hypercubes of maximum dimension are maximum induced hypercubes and we obtain again that cube polynomials of $\Gamma_n$, respectively  $\Lambda_n$, are  of degree $\left\lfloor \frac{n+1}{2} \right\rfloor$, respectively $\leq \left\lfloor \frac{n}{2} \right\rfloor$ \cite{KlavzarCube}.

\end{document}